\tikzstyle{n}=[circle, draw, fill, minimum size=6, inner sep=0]
\tikzstyle{ext}=[circle, draw,  minimum size=3, inner sep=0]
\tikzstyle{int}=[circle, draw, fill, minimum size=3, inner sep=0]
\newtheorem{Thm}{Theorem}[section]
\newtheorem{Lem}[Thm]{Lemma}
\theoremstyle{remark}
\newtheorem{Rem}[Thm]{Remark}
\theoremstyle{definition}
\title[The explicit equivalence between...]{The explicit equivalence between the standard and the logarithmic star product for Lie algebras}
\author{C.~A.~Rossi}
\address{MPIM Bonn, Vivatsgasse 7, 53111 Bonn (Germany)}
\begin{document}



\maketitle

\selectlanguage{english}
\begin{abstract}
The purpose of this short note is to establish an explicit equivalence between the two star products $\star$ and $\star_{\log}$ on the symmetric algebra $\mathrm S(\mathfrak g)$ of a finite-dimensional Lie algebra $\mathfrak g$ over a field $\mathbb K\supset\mathbb C$ of characteristic $0$ associated with the standard angular propagator and the logarithmic one: the differential operator of infinite order with constant coefficients realizing the equivalence is related to the incarnation of the Grothendieck--Teichm\"uller group considered by Kontsevich in~\cite[Theorem 7]{K1}.
\end{abstract}


\selectlanguage{english}

\setcounter{tocdepth}{1}

\section{Introduction}\label{s-0}
For a general finite-dimensional Lie algebra $\mathfrak g$ over a field $\mathbb K\supset \mathbb C$, we consider the symmetric algebra $\mathrm S(\mathfrak g)$.

Deformation quantization {\em \`a la} Kontsevich~\cite{K} permits to endow $A=\mathrm S(\mathfrak g)$ with an associative, non-commutative product $\star$: the universal property of the Universal Enveloping Algebra (shortly, from now on, UEA) $\mathrm U(\mathfrak g)$ and a degree argument imply that there is an isomorphism of associative algebras $\mathcal I$ from $(A,\star)$ to $(\mathrm U(\mathfrak g),\cdot)$.
The algebra isomorphism $\mathcal I$ has been characterized explicitly in~\cite[Subsection 8.3]{K} and~\cite{Sh} as the composition of the Poincar\'e--Birkhoff--Witt (shortly, from now on, PBW) isomorphism (of vector spaces) with an invertible differential operator with constant coefficients and of infinite order associated to the well-known Duflo element $\sqrt{j(\bullet)}$ in the completed symmetric algebra $\widehat{\mathrm S}(\mathfrak g^*)$.
The case of nilpotent Lie algebras, where the aforementioned invertible differential operator equals simply the identity, has been considered in great detail in~\cite{Kath}, where the author has discussed the relationship between deformation quantization and the Baker--Campbell--Hausdorff formula.

In this short note, which takes inspiration from recent results~\cite{ALRT-1,ALRT-2} on the singular logarithmic propagator proposed by Kontsevich in~\cite[Subsection 4.1, F)]{K1}, we discuss the relationship between the star products $\star$ and $\star_{\log}$ on $A$, where $\star_{\log}$ is the star product associated with the logarithmic propagator.

The two star products $\star$ and $\star_{\log}$ on $A$ are obviously equivalent because of the fact that both associative algebras $(A,\star)$ and $(A,\star_{\log})$ are isomorphic to the Universal Enveloping Algebra $(\mathrm U(\mathfrak g),\cdot)$ of $\mathfrak g$ in virtue of degree arguments.

We produce here the explicit form of the equivalence between $\star$ and $\star_{\log}$ {\em via} a translation-invariant, invertible differential operator of infinite order on $A$ depending on the odd traces of the adjoint representation of $\mathfrak g$: more precisely, we provide an explicit formula relating $(A,\star_{\log})$ with $(\mathrm U(\mathfrak g),\cdot)$ {\em via} the PBW isomorphism, which we then compare with the previous one relating $(A,\star)$ with $(\mathrm U(\mathfrak g),\cdot)$.

The main result is a consequence of the logarithmic version of the formality result in presence of two branes from~\cite{CFFR} and the application discussed in~\cite{CFR} (``Deformation Quantization with generators and relations'').
Here a {\em caveat} is necessary: we do not prove here the general logarithmic formality in presence of two branes, which is quite technical and involved (deserving to it a separate and more detailed treating).
Here, we just discuss the main features and provide explicit formul\ae\ with a sketch of the main technicalities.





The present result provides a different insight to the incarnation of the Grothendieck--Teichm\"uller group in deformation quantization considered in~\cite[Theorem 7]{K1}.
Observe that, quite differently from~\cite{K1}, here odd traces of the adjoint representation of $\mathfrak g$ appear non-trivially, because we are not dealing with the Chevalley--Eilenberg cohomology of $\mathfrak g$ with values in $A$.

\subsection*{Acknowledgments} We thank J.~L\"offler for many useful discussions, for the careful reading of a first draft of the present note and for many useful suggestions.

\section{Notation and conventions}\label{s-1}
We consider a field $\mathbb K\supset \mathbb C$.

We denote by $\mathfrak g$ a finite-dimensional Lie algebra over $\mathbb K$ of dimension $d$; by $\{x_i\}$ we denote a $\mathbb K$-basis of $\mathfrak g$.
With $\mathfrak g$ we associate the (linear) Poisson variety $X=\mathfrak g^*$ over $\mathbb K$: the basis $\{x_i\}$ defines a set of global linear coordinates over $X$, and the Kirillov--Kostant Poisson bivector field $\pi$ on $X$ can be written as $\pi=f_{ij}^kx_k\partial_i\partial_j$, where we have omitted wedge product for the sake of simplicity, and $f_{ij}^k$ denote the structure constants of $\mathfrak g$ w.r.t.\ the basis $\{x_i\}$.

We denote by $\mathrm{ad}(\bullet)$ the adjoint representation of $\mathfrak g$ on itself; further, for $n\geq 1$, we denote by $c_n(\bullet)$ the element of $\mathrm S(\mathfrak g^*)$ defined {\em via} $c_n(x)=\mathrm{tr}_\mathfrak g(\mathrm{ad}(\bullet)^n)$.

Finally, $\zeta(\bullet)$ and $\Gamma(\bullet)$ denote the Riemann $\zeta$-function and the $\Gamma$-function respectively.

\section{An equivalence of star products incarnating the Grothendieck--Teichm\"uller group}\label{s-2}
For $\mathfrak g$ as in Section~\ref{s-1}, we consider the Poisson algebra $A=\mathbb K[X]=\mathrm S(\mathfrak g)$ endowed with the linear Kirillov--Kostant Poisson bivector field $\pi$.

Starting from $\pi$, we construct two distinct non-commutative, associative products $\star$ and $\star_{\log}$ on $A$, and we construct then an explicit equivalence between them: this equivalence is related to the Grothendieck--Teichm\"uller group $\mathrm{GRT}$ (or better, to its Lie algebra $\mathfrak{grt}$) in an explicit way, which points out the relationship between the logarithmic propagator and the $\mathrm{GRT}$-group.

\subsection{Explicit formul\ae\ for the products $\star$ and $\star_{\log}$}\label{ss-2-1}
Let $X=\mathbb K^d$ and $\{x_i\}$ a system of global coordinates on $X$, for $\mathbb K$ in Section~\ref{s-1}.

For a pair $(n,m)$ of non-negative integers, by $\mathcal G_{n,m}$ we denote the set of admissible graphs of type $(n,m)$: an element $\Gamma$ of $\mathcal G_{n,m}$ is a directed graph with $n$, resp.\ $m$, vertices of the first, resp.\ second type, such that $i)$ there is no directed edge departing from any vertex of the second type and $ii)$ $\Gamma$ admits whether multiple edges nor short loops ({\em i.e.} given two distinct vertices $v_i$, $i=1,2$, of $\Gamma$ there is at most one directed edge from $v_1$ to $v_2$ and there is no directed edge, whose endpoint coincides with the initial point). 
By $E(\Gamma)$ we denote the set of edges of $\Gamma$ in $\mathcal G_{n,m}$.

We denote by $C_{n,m}^+$ the configuration space of $n$ points in the complex upper half-plane $\mathbb H^+$ and $m$ ordered points on the real axis $\mathbb R$ {\em modulo} the componentwise action of rescalings and real translations: provided $2n+m-2\geq 0$, $C_{n,m}^+$ is a smooth, oriented manifold of dimension $2n+m-2$.
We denote by $\overline C_{n,m}^+$ a suitable compactification {\em \`a la} Fulton--MacPherson introduced in~\cite[Section 5]{K}: $\overline C_{n,m}^+$ is a compact, oriented, smooth manifold with corners of dimension $2n+m-2$.
We will be interested mostly in its boundary strata of codimension $1$. 

We denote by $\omega$, resp.\ $\omega_{\log}$ the closed, real-valued $1$-form
\[
\omega(z_1,z_2)=\frac{1}{2\pi}d\mathrm{arg}\!\left(\frac{z_1-z_2}{\overline z_1-z_2}\right),\ \text{resp.}\ \omega_{\log}(z_1,z_2)=\frac{1}{2\pi i}d\log\!\left(\frac{z_1-z_2}{\overline z_1-z_2}\right),\ (z_1,z_2)\in (\mathbb H^+\sqcup \mathbb R)^2,\ z_1\neq z_2,
\]
where $\mathrm{arg}(\bullet)$ denotes the $[0,2\pi)$-valued argument function on $\mathbb C\smallsetminus\{0\}$ such that $\mathrm{arg}(i)=\pi/2$, and $\log(\bullet)$ denotes the corresponding logarithm function, such that $\log(z)=\ln(|z|)+i\mathrm{arg}(z)$.
 
The $1$-form $\omega$ extends to a smooth, closed $1$-form on $\overline C_{2,0}^+$, such that $i)$ when the two arguments approach to each other in $\mathbb H^+$, $\omega$ equals the normalized volume form $d\varphi$ on $S^1$ and $ii)$ when the first argument approaches $\mathbb R$, $\omega$ vanishes.

On the other hand, $\omega_{\log}$ extends smoothly to all boundary strata of $\overline C_{2,0}^+$ ({\em e.g.} through a direct computation, one sees that $\omega_{\log}$ vanishes, when its first argument approaches $\mathbb R$ and coincides with $\omega$ when the second argument approaches $\mathbb R$) except the one corresponding to the collapse of its two arguments in $\mathbb H^+$, where it has a complex pole of order $1$.

The standard propagator $\omega$ has been introduced and discussed in~\cite[Subsection 6.2]{K}; the logarithmic propagator $\omega_{\log}$ has been first introduced in~\cite[Subsection 4.1, F)]{K1}.

We introduce $T_\mathrm{poly}(X)=A[\theta_1,\dots,\theta_d]$, $A=C^\infty(X)$, where $\{\theta_i\}$ denotes a set of graded variables of degree $1$, which commute with $A$ and anticommute with each other (one may think of $\theta_i$ as $\partial_i$ with a shifted degree).
We further consider the well-defined linear endomorphism $\tau$ of $T_\mathrm{poly}(X)^{\otimes 2}$ of degree $-1$ defined {\em via}
\[
\tau=\partial_{\theta_i}\otimes\partial_{x_i},
\]
where of course summation over repeated indices is understood.
We set $\omega_\tau=\omega\otimes \tau$ and similarly for $\omega_\tau^{\log}$.

With $\Gamma$ in $\mathcal G_{n,m}$ such that $|E(\Gamma)|=2n+m-2$, $\gamma_i$, $i=1,\dots,n$, elements of $T_\mathrm{poly}(X)$ and $a_j$, $j=1,\dots,m$, elements of $A$, we associate two maps $\mathcal U_\Gamma$, $\mathcal U_\Gamma^{\log}$ {\em via}
\[
\left(\mathcal U_\Gamma(\gamma_1,\dots,\gamma_n)\right)(a_1\otimes\cdots\otimes a_m)=\mu_{m+n}\left(\int_{C_{n,m}^+}\omega_{\tau,\Gamma}\left(\gamma_1\otimes\cdots\otimes\gamma_n\otimes a_1\otimes\cdots\otimes a_m\right)\right),\ \omega_{\tau,\Gamma}=\prod_{e\in E(\Gamma)}\omega_{\tau,e},\ \omega_{\tau,e}=\pi_e^*(\omega)\otimes\tau_e,
\]
$\tau_e$ being the graded endomorphism of $T_\mathrm{poly}(X)^{\otimes(m+n)}$ which acts as $\tau$ on the two factors of $T_\mathrm{poly}(X)$ corresponding to the initial and final point of the edge $e$, and $\mu_{m+n}$ denotes the multiplication map from $T_\mathrm{poly}(X)^{m+n}$ to $T_\mathrm{poly}(X)$, followed by the natural projection from $T_\mathrm{poly}(X)$ onto $A$ by setting $\theta_i=0$, $i=1,\dots,d$; $\mathcal U_\Gamma^{\log}$ is defined as in the previous formula by replacing overall $\omega$ by $\omega_{\log}$.

We may re-write both $\mathcal U_\Gamma$ and $\mathcal U_\Gamma^{\log}$ splitting the form-part and the polydifferential operator part as
\[
\begin{aligned}
\left(\mathcal U_\Gamma(\gamma_1,\dots,\gamma_n)\right)(a_1\otimes\cdots\otimes a_m)&=\varpi_\Gamma(\mathcal B_\Gamma(\gamma_1,\dots,\gamma_n))(a_1,\dots,a_m),\ \varpi_\Gamma=\int_{C_{n,m}^+}\omega_\Gamma,\\
\left(\mathcal U_\Gamma^{\log}(\gamma_1,\dots,\gamma_n)\right)(a_1\otimes\cdots\otimes a_m)&=\varpi_\Gamma^{\log}(\mathcal B_\Gamma(\gamma_1,\dots,\gamma_n))(a_1,\dots,a_m),\ \varpi_\Gamma^{\log}=\int_{C_{n,m}^+}\omega_\Gamma^{\log}.
\end{aligned}
\]
The polydifferential parts of $\mathcal U_\Gamma$ and $\mathcal U_\Gamma^{\log}$ are equal, while the corresponding integral weights $\varpi_\Gamma$ and $\varpi_\Gamma^{\log}$ are different.
\begin{Thm}\label{t-star}
For a Poisson bivector field $\pi$ on $X$ and a formal parameter $\hbar$, the formul\ae
\begin{equation}\label{eq-star}
f_1\star_{\hbar} f_2=\sum_{n\geq 0}\frac{\hbar^n}{n!}\sum_{\Gamma\in\mathcal G_{n,2}}(\mathcal U_\Gamma(\underset{n}{\underbrace{\pi,\dots,\pi}}))(f_1,f_2),\ f_1\star_{\log,\hbar} f_2=\sum_{n\geq 0}\frac{\hbar^n}{n!}\sum_{\Gamma\in\mathcal G_{n,2}}(\mathcal U_\Gamma^{\log}(\underset{n}{\underbrace{\pi,\dots,\pi}}))(f_1,f_2),\ f_i\in A,\ i=1,2,
\end{equation}
define $\mathbb K_{\hbar}=\mathbb K[\!\![\hbar]\!\!]$-linear, associative products on $A_{\hbar}=A[\!\![\hbar]\!\!]$. 
\end{Thm}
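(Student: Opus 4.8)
The plan is to establish three things in turn: that each power of $\hbar$ contributes a finite sum of well-defined bidifferential operators, that the construction is $\mathbb{K}_\hbar$-bilinear, and that associativity holds. Bilinearity is immediate from the definitions of $\mathcal U_\Gamma$ and $\mathcal U_\Gamma^{\log}$, since the polydifferential part $\mathcal B_\Gamma$ is multilinear in its arguments while the weights $\varpi_\Gamma,\varpi_\Gamma^{\log}$ are scalars. For well-definedness I would first observe that, at fixed order $n$, the admissibility constraints together with $|E(\Gamma)|=2n+m-2$ force $\mathcal G_{n,2}$ to be finite, so the coefficient of $\hbar^n$ is a finite sum; each summand is a bidifferential operator because $\mathcal B_\Gamma(\pi,\dots,\pi)$ is assembled from finitely many derivatives of the smooth components of $\pi$. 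The only genuine analytic point is the finiteness of the scalar weights. For $\varpi_\Gamma$ this is Kontsevich's convergence result. For $\varpi_\Gamma^{\log}$ one must control the order-$1$ pole of $\omega_{\log}$ along the stratum where two bulk points collide, whose naive local model $\d r/r$ threatens a logarithmic divergence; verifying integrability of the full integrand is precisely the delicate input I would import from the logarithmic formality of~\cite{CFFR,ALRT-1,ALRT-2} rather than reprove.

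For associativity I would follow the standard Stokes-theoretic argument. Writing the associator
\[
\mathrm{Assoc}_\star(f_1,f_2,f_3)=(f_1\star_\hbar f_2)\star_\hbar f_3-f_1\star_\hbar(f_2\star_\hbar f_3),
\]
its coefficient of $\hbar^n$ is a sum over $\Gamma\in\mathcal G_{n,3}$ of weights $\varpi_\Gamma$ (resp.\ $\varpi_\Gamma^{\log}$) applied to the corresponding tridifferential operators $\mathcal B_\Gamma(\pi,\dots,\pi)$. Since $\omega_\Gamma$ (resp.\ $\omega_\Gamma^{\log}$) is a product of closed $1$-forms it is itself closed, so Stokes' theorem on $\overline C_{n,3}^+$ gives $\int_{\partial\overline C_{n,3}^+}\omega_\Gamma=0$. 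Expanding the boundary as a union of codimension-$1$ strata then yields a quadratic relation among the weights, and I would show this relation is exactly the vanishing of $\mathrm{Assoc}_\star$.

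The codimension-$1$ strata split into two families. In the first, a subset $S$ of the $n$ bulk points collapses to a single point in $\mathbb H^+$: the contribution factorizes as an integral over $\overline C_{|S|,0}^+$ times one over the quotient configuration, and the bulk factor reassembles the Schouten--Nijenhuis bracket $[\pi,\pi]$, which vanishes because $\pi$ is Poisson. In the second family, a cluster of bulk points collapses to the real axis at one of the three marked points; here the stated boundary properties of the propagator — it vanishes as its first argument tends to $\mathbb R$ and coincides with $\omega$ as the second does — ensure that precisely the two terms $(f_1\star f_2)\star f_3$ and $f_1\star(f_2\star f_3)$ survive, with opposite signs. The identical bookkeeping applies to $\star_{\log,\hbar}$ once one knows, as recorded in the excerpt, that $\omega_{\log}$ shares these boundary properties on the strata where it extends smoothly.

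The main obstacle is the first family of strata in the logarithmic case. Because $\omega_{\log}$ does not extend smoothly across the collapse of two bulk points but acquires an order-$1$ pole there, the factorization of the boundary contribution must be replaced by a careful limit/residue analysis: one has to verify both that these contributions converge and that they still reassemble into $[\pi,\pi]$ — and hence vanish — rather than producing an anomalous correction to associativity. This is exactly the subtle content of the logarithmic two-brane formality; my plan is to invoke the analysis of~\cite{CFFR} and~\cite{ALRT-1,ALRT-2} and to check only that its output is compatible with the explicit bidifferential formul\ae\ in~\eqref{eq-star}.
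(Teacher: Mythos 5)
Your proposal is correct and is essentially the paper's own route: the paper likewise gives no self-contained proof, disposing of $\star$ by citing Kontsevich's Formality Theorem~\cite[Theorem 6.4]{K} and of $\star_{\log}$ by citing the logarithmic formality of~\cite{ALRT-1,ALRT-2}, with the technical core only sketched in Appendix B — exactly the two inputs you propose to import, wrapped in the same Stokes-theoretic skeleton. The one point worth recording is that the imported results are sharper than your framing of them: what you describe as ``verifying integrability near the order-$1$ pole'' is settled by showing that in top degree $\omega_\Gamma^{\log}$ in fact extends \emph{real-analytically} to $\overline C_{n,m}^+$ (by skew-symmetry and the top-degree condition, each singular $d\rho_i/\rho_i$ can only survive paired with a factor $\rho_i\,d\varphi_i$, so no singular integrand remains), and what you call a ``careful limit/residue analysis'' of the bulk-collapse strata is packaged as the regularized Stokes' Theorem~\ref{t-stokes-reg}, whose regularized boundary contributions (obtained by formally setting $d\rho_i/\rho_i=0$) reproduce precisely the standard quadratic relations, including the $[\pi,\pi]$ terms you need to vanish.
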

The first expression in~\eqref{eq-star} has been proved to be well-defined ({\em i.e.} all integrals converge) and to yield an associative product as a corollary of the Formality Theorem~\cite[Theorem 6.4]{K}.

On the other hand, a Formality Theorem in presence of $\omega^{\log}$ has been proved recently in~\cite{ALRT-1,ALRT-2}, from which follows that the second expression in~\eqref{eq-star} is a well-defined, associative product.
Appendix B contains a sketch of the technical arguments explained in detail in~\cite{ALRT-1,ALRT-2}. 

\subsection{Relationship between $\star$, $\star_{\log}$ and the UEA of $\mathfrak g$}\label{ss-2-2}
We now restrict our attention to $X=\mathfrak g^*$, for $\mathfrak g$ as in Section~\ref{s-1}, endowed with the Kirillov--Kostant Poisson bivector $\pi$.

Degree reasons imply that the products in~\eqref{eq-star} restrict to $A_{\hbar}$, $A=\mathrm S(\mathfrak g)$, and that the $\hbar$-dependence of both of them is in fact polynomial: therefore, we may safely set $\hbar=1$, and we use the short-hand notation $\star$ and $\star_{\log}$ on $A$.

With $\mathfrak g$, we associate its UEA (short for Universal Enveloping Algebra) $(\mathrm U(\mathfrak g),\cdot)$; we denote by $\mathrm{PBW}$ the symmetrization isomorphism (of vector spaces) from $A$ to $\mathrm U(\mathfrak g)$.
\begin{Thm}\label{t-star-UEA}
For $\mathfrak g$ as in Section~\ref{s-1}, there exist isomorphisms of associative algebras $\mathcal I$ and $\mathcal I_{\log}$ from $(A,\star)$ and $(A,\star_{\log})$ respectively to $(\mathrm U(\mathfrak g),\cdot)$, which are explicitly given by
\begin{equation}\label{eq-star-UEA}
\mathcal I=\mathrm{PBW}\circ \sqrt{j(\bullet)},\quad \mathcal I_{\log}=\mathrm{PBW}\circ j_\Gamma(\bullet),
\end{equation}
where $\sqrt{j(\bullet)}$ and $j_\Gamma(\bullet)$ are elements of $\widehat{\mathrm S}(\mathfrak g^*)$ defined {\em via}
\begin{align}
\label{eq-star-UEA-1}\sqrt{j(x)}&=\sqrt{\mathrm{det}_\mathfrak g\!\left(\frac{1-e^{-\mathrm{ad}(x)}}{\mathrm{ad}(x)}\right)}=\exp\!\left(-\frac{1}4c_1(x)+\sum_{n\geq 1}\frac{\zeta(2n)}{(2n)(2\pi i)^{2n}}c_{2n}(x)\right),\\
\label{eq-star-UEA-2}j_\Gamma(x)&=\exp\!\left(-\frac{1}4 c_1(x)+\sum_{n\geq 2}\frac{\zeta(n)}{n(2\pi i)^n}c_n(x)\right)=\sqrt{j(x)}\ \exp\!\left(\sum_{n\geq 1}\frac{\zeta(2n+1)}{(2n+1)(2\pi i)^{2n+1}}c_{2n+1}(x)\right),\ x\in\mathfrak g,
\end{align}
where both elements of the completed symmetric algebra $\widehat{\mathrm S}(\mathfrak g^*)$ are regarded as invertible differential operators with constant coefficients and of infinite order on $A$. 

(We will comment at the end of the proof on the (improperly) adopted notation for both expressions~\eqref{eq-star-UEA-1} and~\eqref{eq-star-UEA-2}.)
\end{Thm}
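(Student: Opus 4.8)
The plan is to derive both isomorphisms from a single mechanism: the explicit comparison, furnished by the formality morphism in the presence of two branes, between a Kontsevich-type product on $\S(\mg)$ built from the linear Kirillov--Kostant bivector and the associative structure of $\U(\mg)$ presented by generators and relations. For $\star$ this is Kontsevich's computation in~\cite[Subsection 8.3]{K} (see also~\cite{Sh}); for $\star_{\log}$ it is the logarithmic two-brane version of~\cite{CFFR} together with the ``generators and relations'' formalism of~\cite{CFR}. In each case the comparison produces an isomorphism of the shape $\mathrm{PBW}\circ D$, with $D\in\widehat{\S}(\mg^*)$ a translation-invariant, invertible differential operator of infinite order. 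First I would explain why such a $D$ exists and is grouplike: the degree argument already invoked before the statement forces the $\hbar$-expansion to be polynomial, while compatibility of the (logarithmic) formality $L_\infty$-morphism with the two branes realises the equivalence between the Kontsevich-type product and the brane product as the exponential of its connected constant-coefficient part. The whole theorem then reduces to the separate identifications $D=\sqrt{j(\bul)}$ and $D_{\log}=j_\Gamma(\bul)$.

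The second step is to reduce $\log D$ to wheels. Since $\pi$ is linear, the body $f_{ij}^k x_k$ sitting at each aerial vertex is linear and can absorb at most one incoming edge; balancing in- and out-degrees then forces every connected graph surviving in $\log D$ to have its aerial part a single directed cycle, i.e.\ to be a wheel $W_n$ on $n$ aerial vertices whose free half-edges differentiate the brane. The cyclic contraction of the structure constants around $W_n$ produces exactly $c_n(\bul)=\mathrm{tr}_\mg(\mathrm{ad}(\bul)^n)$, so that $\log D=\sum_{n\geq1}w_n\,c_n(\bul)$ and $\log D_{\log}=\sum_{n\geq1}w_n^{\log}\,c_n(\bul)$, the scalar weights $w_n$, $w_n^{\log}$ being the corresponding two-brane configuration integrals. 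Crucially, the polydifferential part $\mathcal B_{W_n}$ is the same for both propagators---as recorded just before the statement---so the two operators differ only through these weights, and everything comes down to evaluating them.

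For the standard propagator I would invoke the classical evaluation. The reflection $z\mapsto-\bar z$, under which $\omega$ is anti-invariant, forces $w_n=0$ for odd $n\geq3$ (Kontsevich's vanishing of odd wheels), while the surviving even weights are pinned down by matching $\sum_n w_n c_n(\bul)$ against the scalar expansion $\log\frac{1-e^{-t}}{t}=-\frac t2+\sum_{n\geq1}\frac{B_{2n}}{2n\,(2n)!}t^{2n}$ evaluated on $t=\mathrm{ad}(\bul)$ and traced. Euler's identity $\zeta(2n)=\frac{(-1)^{n+1}(2\pi)^{2n}B_{2n}}{2\,(2n)!}$ then recasts the even coefficients in the form $\frac{\zeta(2n)}{(2n)(2\pi i)^{2n}}$, with the isolated $-\frac14 c_1(\bul)$ carrying the unimodular correction; this is exactly~\eqref{eq-star-UEA-1}.

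The logarithmic case is the heart of the matter and the main obstacle. Here the reflection argument collapses, because $\omega_{\log}$ is \emph{not} anti-invariant under $z\mapsto-\bar z$, so the odd wheels no longer vanish and every weight $w_n^{\log}$ must be computed. The plan is to exploit that $\omega_{\log}=\frac{1}{2\pi i}\d\log\!\big(\tfrac{z_1-z_2}{\bar z_1-z_2}\big)$ is exact with an explicit logarithmic primitive: peeling the cyclic edges of $W_n$ one at a time by Stokes on the compactification, controlled by the boundary analysis of~\cite{ALRT-1,ALRT-2,CFFR}, collapses the wheel integral to an iterated $\d\log$-integral around the loop, which evaluates to $\frac{\zeta(n)}{n(2\pi i)^n}$ for $n\geq2$; the linear term keeps the coefficient $-\frac14$ of the standard case because, as noted before the statement, the one-edge limit of $\omega_{\log}$ coincides with that of $\omega$. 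Summing gives $\log D_{\log}=-\frac14 c_1(\bul)+\sum_{n\geq2}\frac{\zeta(n)}{n(2\pi i)^n}c_n(\bul)$, i.e.\ $D_{\log}=j_\Gamma(\bul)$; splitting the sum into even and odd $n$ and feeding in the standard case factors it as $\sqrt{j(\bul)}$ times $\exp\!\big(\sum_{n\geq1}\frac{\zeta(2n+1)}{(2n+1)(2\pi i)^{2n+1}}c_{2n+1}(\bul)\big)$, displaying the odd-zeta series that incarnates $\grt$ and proving~\eqref{eq-star-UEA-2}. The genuinely delicate points---convergence of the logarithmic weights, the absence of spurious codimension-one boundary contributions, and the grouplikeness that legitimises exponentiating---are precisely those deferred to the appendices and to~\cite{ALRT-1,ALRT-2,CFFR,CFR}, and the closing remark on the ``improper'' notation would reconcile the sign conventions implicit in reading the resulting $\zeta$-series as $\sqrt{j(\bul)}$ and $j_\Gamma(\bul)$.
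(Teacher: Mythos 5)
Your overall architecture coincides with the paper's: both treatments run the problem through the two branes attached to the Koszul-dual pair $A=\mathrm S(\mathfrak g)$, $B=\wedge(\mathfrak g^*)$ and the ``generators and relations'' machinery of~\cite{CFR}, producing an isomorphism of the form $\mathrm{PBW}\circ D$; your reduction of $\log D$ to wheel graphs {\em via} linearity of $\pi$, and your citation of the classical Duflo evaluation for $\star$, also match the paper. The gap sits at the one step that carries the actual content of the theorem: the evaluation $w_n^{\log}=\zeta(n)/(n(2\pi i)^n)$ of the logarithmic wheel weights. You assert that ``peeling the cyclic edges of $W_n$ one at a time by Stokes \dots collapses the wheel integral to an iterated $\d\log$-integral around the loop, which evaluates to $\zeta(n)/(n(2\pi i)^n)$'', but you give no argument for either the collapse or the evaluation. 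This is not a routine computation that can be waved through: the integrand is a form of top degree on a compactified configuration space with poles of order one along boundary strata, so every application of Stokes must be the regularized version (Theorem~\ref{t-stokes-reg}) and produces boundary contributions from {\em all} codimension-one strata, not only the ones you would like to keep; nothing in your sketch explains why the unwanted ones disappear or how a $\zeta$-value emerges at the end.

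The paper resolves exactly this point by a different and sharper mechanism, following Willwacher's strategy in~\cite{W}: a single application of regularized Stokes yields the three-term identity of Figure~\ref{fig-2}, relating the two-brane logarithmic wheel weight to two wheel weights taken with respect to $\omega_{\log}$ on $\overline C_{n+1,0}^+$ — one with spokes pointing toward a central vertex, one with spokes pointing away from it. The outward-spoke weight is then imported from Merkulov's computation~\cite{Merk}, which is where the values $\zeta(n)/(n(2\pi i)^n)$ actually come from; the inward-spoke weight is killed by the Vanishing Lemma (Lemma~\ref{l-van}): a vertex of the first type with no outgoing edges forces the integrand to depend holomorphically on that vertex's coordinate, and a top-degree form holomorphic in one complex coordinate vanishes. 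This Vanishing Lemma is precisely the logarithmic substitute for the reflection-symmetry argument that you correctly observe is unavailable for $\omega_{\log}$, and it is entirely absent from your proposal; without it (or without Merkulov's input, which you also never invoke) your sketch has no means of producing the claimed odd-zeta coefficients, so the identity~\eqref{eq-star-UEA-2} remains asserted rather than proven.
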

\begin{proof}
The identity on the left-hand side of~\eqref{eq-star-UEA} has been proved in~\cite[Subsection 8.3]{K} by means of the compatibility between cup products; a different proof has been presented in~\cite[Subsection 3.2]{CFR}.
We will adopt the strategy proposed in~\cite[Subsection 3.2]{CFR} to prove the identity on the right-hand side, to which we refer for more details. 

Let us momentarily re-introduce the formal parameter $\hbar$, and consider the corresponding $\hbar$-formal Poisson bivector $\hbar\pi$.

To $\mathfrak g$, we may attach two natural quadratic algebras, $A$ and $B=\wedge(\mathfrak g^*)$: observe that, in the present framework, $A$ is concentrated in degree $0$, while $B$ is non-negatively graded.
It is well-known that $A$ and $B$ are Koszul algebras, and moreover they are Koszul dual to each other.

We consider then the (graded) algebras $A_{\hbar}$, $B_{\hbar}$ over the ring $\mathbb K[\!\![\hbar]\!\!]$.
With the formal Poisson structure $\hbar\pi$, we associate the product $\star_{\log,\hbar}$ {\em via} the formality quasi-isomorphism $\mathcal U^{\log}$.

On the other hand, we may consider the $\hbar$-formal Fourier dual quadratic vector field $\hbar\widehat\pi=\hbar f_{ij}^k\theta_i\theta_j\partial_{\theta_k}$, borrowing previous notation for the graded basis $\{\theta_i\}$ of $B$, on $B_{\hbar}$ ($\hbar\widehat\pi$ is the $\hbar$-shifted Chevalley--Eilenberg differential $d_{\hbar}$ on $B_{\hbar}$).
Thus, the triple $(B_{\hbar},d_{\hbar},\wedge)$ is a dg algebra over $\mathbb K[\!\![\hbar]\!\!]$: the graded formality quasi-isomorphism $\mathcal V$ in~\cite[Appendix A]{CF} admits a logarithmic version $\mathcal V^{\log}$ simply by replacing everywhere $\omega$ by $\omega_{\log}$, and the MC element $\mathcal V^{\log}(\hbar\widehat\pi)$ endows $B_{\hbar}$ with the $A_\infty$-structure over $\mathbb K[\!\![\hbar]\!\!]$ given by $\wedge+\mathcal V^{\log}(\hbar\widehat\pi)$.
Degree arguments and the fact that the logarithmic integral weight $\varpi^{\log}_\Gamma$ associated to the graph $\Gamma$ in depicted in Figure~\ref{fig-1}, $i)$, is trivial yield that the only non-trivial Taylor components of the aforementioned $A_\infty$-structure are $d_{\hbar}$ and $\wedge$, thus deformation quantization produces out of the graded commutative algebra $(B,\wedge)$ the $\hbar$-shifted Chevalley--Eilenberg complex $(B_{\hbar},d_{\hbar},\wedge)$.
The computation of the said weight $\varpi_\Gamma^{\log}$ has been performed in~\cite[Lemma 6.8]{ALRT-2}: it follows from Stokes' Theorem~\ref{t-stokes-reg}, Appendix B.

We refer to Appendix A for a very quick review of the needed $A_\infty$-structures in the forthcoming discussion.

We regard $A$ and $B$ as unital $A_\infty$-algebras: in~\cite[Subsection 6.2]{CFFR} a non-trivial $A_\infty$-$A$-$B$-bimodule structure on $K=\mathbb K$ has been explicitly constructed, which restricts to the standard augmentation $A$- and $B$-module structure and such that $\mathrm L_A:A\to \underline{\mathrm{End}}_{B^+}(K)$ is an $A_\infty$-quasi-isomorphism.

Observe that we may also consider $A_\infty$-algebras and $A_\infty$-bimodules over the ring $K[\!\![\hbar]\!\!]$, and all previous definitions and constructions apply to this setting as well.
In particular, we may regard $(A_{\hbar},\star_{\log,\hbar})$ and $(B_{\hbar},d_{\hbar},\wedge)$ are $A_\infty$-algebras over $\mathbb K[\!\![\hbar]\!\!]$.
\begin{Lem}\label{l-A_infty}
There exists an $\hbar$-formal flat deformation $K_{\hbar}=\mathbb K[\!\![\hbar]\!\!]$ of the $A_\infty$-$A$-$B$-bimodule $K$ as an $A_\infty$-$(A_{\hbar},\star_{\log,\hbar})$-$(B_{\hbar},d_{\hbar},\wedge)$-bimodule.
\end{Lem}
\begin{proof}[Sketch of proof of Lemma~\ref{l-A_infty}]
Let us consider the first quadrant $Q^{+,+}$ in $\mathbb C$, with which we associate the configuration space $C_{2,0,0}^+$ of two distinct points in $Q^{+,+}$ {\em modulo} rescalings.
We define on $C_{2,0,0}^+$ the closed, complex-valued $1$-form
\begin{equation}\label{eq-4-log}
\omega^{+,-}_{\log}(z_1,z_2)=\frac{1}{2\pi i}d\log\!\left(\frac{z_1-z_2}{\overline z_1-z_2}\frac{\overline z_1+z_2}{z_1+z_2}\right)-\frac{1}{\pi i}d\log\!\left(\left|\frac{\overline z_1+z_2}{z_1+z_2}\right|\right).
\end{equation}

We denote by $\overline C_{2,0,0}^+$ the compactified version {\em \`a la} Fulton--MacPherson of $C_{2,0,0}^+$, see~\cite[Subsection 3.1]{CRT} for a complete description thereof and of its boundary stratification.
The $1$-form~\eqref{eq-4-log} has the following properties:
\begin{itemize}
\item[$i)$] when both arguments of $\omega_{\log}^{+,-}$ approach $\mathbb R^+$, resp.\ $i\mathbb R^+$, $\omega^{+,-}_{\log}=\omega^-_{\log}$, resp.\ $\omega^{+,-}_{\log}=\omega^+_{\log}$, where $\omega_{\log}^+=\omega_{\log}$ and $\omega_{\log}^-=\sigma^*(\omega_{\log})$, $\sigma$ being the involution of $C_{2,0}^+$ given by $(z_1,z_2)\mapsto (z_2,z_1)$;
\item[$ii)$] $\omega^{+,-}_{\log}$ vanishes, when its initial, resp.\ final point, approaches $i\mathbb R^+$, resp. $\mathbb R^+$, or the origin;
\item[$iii)$] $\omega^{+,-}_{\log}$ has a simple pole of order $1$ along the boundary stratum $S^1\times C_{1,0,0}^+$ of $\overline C_{2,0,0}^+$ corresponding to the collapse of its two arguments to a single point in $Q^{+,+}$, and the $S^1$-piece of the corresponding regularization (see~\cite[Subsubsection 2.1.1]{ALRT-2} or Appendix B) equals the normalized volume form of $S^1$.
There is also a $C^+_{1,0,0}$-piece in the regular part, whose presence justifies the fact that the logarithmic counterpart of the formality result for two branes requires admissible graphs with short loops, see also later on.
\end{itemize}
Properties $i)$-$iii)$ can be checked by direct computations using local coordinates as in the proof of~\cite[Lemma 5.4]{CFFR}, with due modifications because of the pole.

The explicit formul\ae\ for the $A_\infty$-bimodule structure $d_{K_{\hbar}}^{k,l}$ on $K_{\hbar}$ can be obtained from the ones for the corresponding $A_\infty$-structure on $K_{\hbar}$ constructed in~\cite[Section 7]{CFFR} by replacing $\omega^{+,-}$ by its logarithmic counterpart~\eqref{eq-4-log}: also for later computations, we frequently and implicitly refer to~\cite[Section 7]{CFFR}.

The convergence of the logarithmic integral weights appearing in the $A_\infty$-bimodule structure, as well as the $A_\infty$-property itself, will be shown in a forthcoming paper in their full generality: still, we refer to the arguments sketched in the Appendix B for a proof of the convergence of the logarithmic integral weights (see also~\cite[Proposition 4.2]{ALRT-2}), while the $L_\infty$-property follows by means of Stokes' Theorem~\ref{t-stokes-reg} (see~\cite[Theorem 1.8]{ALRT-1}), with some due modifications which arise from the regular parts of the $4$-colored logarithmic propagators involved.

Observe that, in view of the properties of~\eqref{eq-4-log}, if we set $\hbar=0$, we recover the $A_\infty$-$A$-$B$-bimodule structure on $K$ from~\cite{CFR}.
\end{proof}
In particular,~\cite[Proposition 2.4, Lemma 2.5 and Theorem 2.7]{CFR} are valid in their full generality in the logarithmic framework as well: in particular, there is an algebra isomorphism 
\begin{equation}\label{eq-adam-def}
\mathrm L_{A_{\hbar}}^{1,\log}:(A_{\hbar},\star_{\log,\hbar})\to \mathrm T(\mathfrak g)/\left(\mathrm T(\mathfrak g)\otimes \langle x_i\otimes x_j-x_j\otimes x_i-\hbar[x_i,x_j]:\ i,i=1,\dots,d\rangle\otimes \mathrm T(\mathfrak g)\right)[\!\![\hbar]\!\!]=(\mathrm U_{\hbar}(\mathfrak g),\cdot),
\end{equation}
where $\mathrm U_{\hbar}(\mathfrak g)$ is the UEA of the $\hbar$-shifted Lie algebra $\mathfrak g_{\hbar}=\mathfrak g$, with Lie bracket $\hbar[\bullet,\bullet]$.

The algebra isomorphism~\eqref{eq-adam-def} is a particular case of the logarithmic version of Shoikhet's conjecture~\cite{Sh1} about deformation quantization with generators and relations. 

Again, degree reasons imply that we may safely set $\hbar=1$ in~\eqref{eq-adam-def}, which in particular implies that $\mathrm L_A^{1,\log}$ yields an algebra isomorphism from $(A,\star_{\log})$ to $(\mathrm U(\mathfrak g),\cdot)$.
\begin{Lem}\label{l-I_log}
The algebra isomorphism $\mathrm L_A^{1,\log}$ can be computed explicitly and equals $\mathcal I_{\log}$.
\end{Lem}
\begin{proof}[Sketch of the proof of Lemma~\ref{l-I_log}]
The quasi-isomorphism $\mathrm L_A^{1,\log}$ is explicitly given by the formula
\[
\mathrm L_A^{1,\log}(a_1)^m(1,b_1,\dots,b_m)=d_K^{1,m}(a_1,1,b_1,\dots,b_m),
\] 
where one must think of $d_K^{1,m}$ as of $d_{K_{\hbar}}^{1,m}$ for $\hbar=1$, and $a_1$ in $A$, and $b_i$ in $B$, $i=1,\dots,m$.

Using the graphical definition of the deformed $A_\infty$-$A$-$B$-bimodule structure specified by the Taylor components $d_K^{k,l}$ in~\cite[Subsections 6.2, 7.1 and 7.2]{CFFR}, we consider an admissible graph $\Gamma$ of type $(n,1,m)$ with $n$ vertices of the first type ({\em i.e.} in $Q^{+,+}$), $1$ vertex of the second type on $i\mathbb R^+$ and $m$ ordered vertices of the second type on $\mathbb R$; $\Gamma$ may have short loops at vertices of the first type, no edge may depart from the vertex on $i\mathbb R^+$ and no edge may arrive at a vertex on $\mathbb R$.
From any vertex of the first type depart exactly two directed edges, whence $|E(\Gamma)|=2n+m$.
\begin{figure}
\centering
\begin{tikzpicture}[>=latex]
\tikzstyle{k-int}=[draw,fill=gray!40,circle,inner sep=0pt,minimum size=2.5mm]
\tikzstyle{n-int}=[draw,fill=black,circle,inner sep=0pt,minimum size=2.5mm]
\tikzstyle{ext}=[draw,fill=white,circle,inner sep=0pt,minimum size=2.5mm]
\tikzstyle{spec}=[draw,rectangle,inner sep=0pt,minimum size=3mm]

\begin{scope}
\node[n-int] (1) at (-0.7,1.5) {};
\node[n-int] (2) at (.7,1.5) {};
\draw[thick,->] (1) [out=45,in=135] to (2);
\draw[thick,->] (2) [out=225,in=-45] to (1);
\end{scope}
\node at (0,-.5) {$i)$};
\begin{scope}[scale=0.75,shift={(6,0)}]
\draw (0,0) to (4,0);
\draw (0,0) to (0,4);
\node[ext] (v1) at (0,2) {};
\node[ext] (h1) at (1,0) {};
\node[ext] (h2) at (2,0) {};
\node[ext] (h3) at (3,0) {};
\begin{scope}[scale=0.75,shift={(3,4.5)}]
\node[n-int] (w1) at (30:1.5) {};
\node[n-int] (w2) at (150:1.5) {};
\node[n-int] (w3) at (270:1.5) {};
\draw[thick,->] (w1) to (w2);
\draw[thick,->] (w2) to (w3);
\draw[thick,->] (w3) to (w1);
\end{scope}
\draw[thick,->] (w1) to (v1);
\draw[thick,->] (w2) to (v1);
\draw[thick,->] (w3) to (v1);
\draw[thick,->] (h1) [out=90,in=0] to (v1);
\draw[thick,->] (h2) [out=90,in=0] to (v1);
\draw[thick,->] (h3) [out=90,in=0] to (v1);
\end{scope}
\node at (5.5,-.5) {$ii)$};
\end{tikzpicture}
\caption{\label{fig-1} \selectlanguage{english} $i)$ The unique admissible graph of type $(2,0)$ and two directed edges; $ii)$ an admissible graph $\Gamma$ of type $(3,1,3)$ contributing to $\mathrm L_A^{1,\log}$.
}
\end{figure}
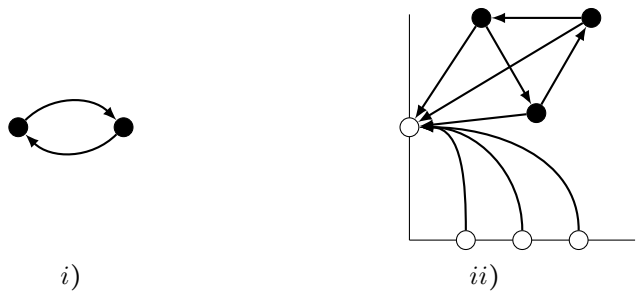
The multidifferential operator associated to an admissible graph $\Gamma$ of type $(n,1,m)$ as before is analogous to the one appearing in the construction of~\eqref{eq-star}: $b_i$ in $B$, $i=1,\dots,m$, is regarded as a polyderivation with constant coefficients on $A$, while a short loop corresponds to the divergence operator on $T_\mathrm{poly}(X)$ w.r.t.\ the constant volume form on $X$.

The corresponding integral weight $\varpi^{\log,+,-}_\Gamma$ is obtained by associating with an edge between two distinct vertices, resp.\ a short loop at a vertex of the first type, the closed $1$-form~\eqref{eq-4-log} on $C_{2,0,0}^+$, resp.\ the exact $1$-form $d\mathrm{arg}(\bullet)/(4\pi)$ on $C_{1,0,0}^+$: then one integrates the corresponding closed form $\omega_\Gamma^{\log,+,-}$ of degree $2n+m$ over $C_{n,1.m}^+$.
The form $\omega_\Gamma^{\log,+,-}$ extends to a complex-valued, real analytic closed form of top degree on $\overline C_{n,1,m}^+$, whence $\varpi_\Gamma^{\log,+,-}$ converges.

The fact that $\pi$ is a linear bivector field implies that a vertex of the first type of $\Gamma$ can be the endpoint of at most one edge.
Moreover, the degree of $\omega_\Gamma^{\log,+,-}$ equals $2n+\sum_{j=1}^m|b_j|$, where $|b_j|$ denotes the degree of $b_j$ as a constant polyderivation on $A$, whence $\sum_{j=1}^m|b_j|=m$. 
Dimensional reasons for $\varpi_\Gamma^{\log,+,-}$ imply that $|b_j|=1$, $j=1,\dots,m$: namely, if $|b_i|\geq 2$, for $i=1,\dots,m$, then $|b_j|=0$, for $i\neq j$, whence $\Gamma$ would have a $0$-valent vertex of the second type.
In other words, there would exist a point on a $1$-dimensional manifold, along which no form is integrated, yielding a trivial integral weight.

Furthermore, from any vertex of the first type may depart at most one edge to the only vertex on $i\mathbb R^+$ (otherwise, $\omega_\Gamma^{\log,+,-}$ would contain a square of~\eqref{eq-4-log}). 
Let now $p$ denote the number of edges from vertices of the first type hitting the only vertex on $i\mathbb R^+$: then, obviously, $p\leq n$.
With any edge or short loop of $\Gamma$ is associated a derivative w.r.t.\ $\{x_i\}$: the degree in $A$ of the multidifferential operator equals $n-(2n-p)-j=-n-j+p\geq 0$, where $0\leq j\leq m$ denotes the number of edges starting from vertices on $\mathbb R$ and hitting vertices of the first type, thus $p\geq n+j$.
It follows immediately that $j=0$ and $p=n$, {\em i.e.} edges from vertices on $\mathbb R$ may arrive only at the unique vertex on $i\mathbb R$, and from any vertex of the first type exactly one edge hits the said vertex on $i\mathbb R^+$, while the other edge may hit any vertex of the first type (including the initial point itself of the given edge).

Thus, a general admissible graph $\Gamma$ of type $(n,1,m)$ is the disjoint union of wheel-like graphs with spokes pointing towards the unique vertex on $i\mathbb R^+$ and a graph with single edges starting from ordered vertices on $\mathbb R$ and hitting the unique vertex on $i\mathbb R^+$, as in Figure~\ref{fig-1}, $ii)$.

The behavior of~\eqref{eq-4-log} along the boundary strata of $\overline C_{2,0,0}^+$ implies that the multidifferential operators associated with admissible graphs with no vertices of the first type contribute to the symmetrization isomorphism $\mathrm{PBW}$ from $A$ to $\mathrm U(\mathfrak g)$, see also~\cite[Subsection 4.2]{CFR}.

On the other hand, the wheel-like graphs sum up to yield exactly the invertible differential operator of infinite order and with constant coefficients specified by $j_\Gamma(\bullet)$ in $\widehat{\mathrm S}(\mathfrak g^*)$, once we have computed the logarithmic integral weights of the wheel-like graphs.

For this, we use the strategy adopted in~\cite[Appendix B]{W}, where we replace $\omega^{+,-}$ by its logarithmic counterpart $\omega_{\log}^{+,-}$.

More precisely, the discussion in the first part of Appendix B implies that Stokes' Theorem~\ref{t-stokes-reg}, Appendix B, applies to differential forms associated with a wheel-like graphs with $n+1$ vertices as before.
The boundary conditions for $\omega^{+,-}_{\log}$ and the regularization morphism imply the graphical relation among logarithmic integral weights depicted in Figure~\ref{fig-2} for $n=3$.
Observe that we have adopted the lazy convention for the signs: however, signs behave exactly as in~\cite[Appendix B]{W}, because of regularization morphism does not alter signs.
\begin{figure}
\centering
\begin{tikzpicture}[>=latex]
\tikzstyle{k-int}=[draw,fill=gray!40,circle,inner sep=0pt,minimum size=2.5mm]
\tikzstyle{n-int}=[draw,fill=black,circle,inner sep=0pt,minimum size=2.5mm]
\tikzstyle{ext}=[draw,fill=white,circle,inner sep=0pt,minimum size=2.5mm]
\tikzstyle{spec}=[draw,rectangle,inner sep=0pt,minimum size=3mm]

\begin{scope}[scale=0.75]
\draw (0,0) to (3,0);
\draw (0,0) to (0,3);
\node[ext] (v) at (0,1) {};
\begin{scope}[shift={(35:3)}]
\node[n-int] (w1) at (0:1) {};
\node[n-int] (w2) at (120:1) {};
\node[n-int] (w3) at (240:1) {};
\draw[thick,->] (w1) to (w2);
\draw[thick,->] (w2) to (w3);
\draw[thick,->] (w3) to (w1);
\end{scope}
\draw[thick,->] (w1) to (v);
\draw[thick,->] (w2) to (v);
\draw[thick,->] (w3) to (v);
\end{scope}
\node at (3.25,1.25) {$+$};
\begin{scope}[scale=0.75,shift={(6,0)}]
\node[n-int] (c) at (0,0) {};
\begin{scope}[shift={(0,2)}]
\node[n-int] (w1) at (90:1) {};
\node[n-int] (w2) at (210:1) {};
\node[n-int] (w3) at (330:1) {};
\draw[thick,->] (w1) to (w2);
\draw[thick,->] (w2) to (w3);
\draw[thick,->] (w3) to (w1);
\end{scope}
\draw[thick,->] (w1) to (c);
\draw[thick,->] (w2) to (c);
\draw[thick,->] (w3) to (c);
\end{scope}
\node at (6,1.25) {$+$};
\begin{scope}[scale=0.75,shift={(10,0)}]
\node[n-int] (c) at (0,0) {};
\begin{scope}[shift={(0,2)}]
\node[n-int] (w1) at (90:1) {};
\node[n-int] (w2) at (210:1) {};
\node[n-int] (w3) at (330:1) {};
\draw[thick,->] (w2) to (w1);
\draw[thick,->] (w3) to (w2);
\draw[thick,->] (w1) to (w3);
\end{scope}
\draw[thick,->] (c) to (w1);
\draw[thick,->] (c) to (w2);
\draw[thick,->] (c) to (w3);
\end{scope}
\node at (9,1.25) {$=0$};
\end{tikzpicture}
\caption{\label{fig-2} \selectlanguage{english} Graphical representation of the relation between weights of wheel-like graphs.
}
\end{figure}
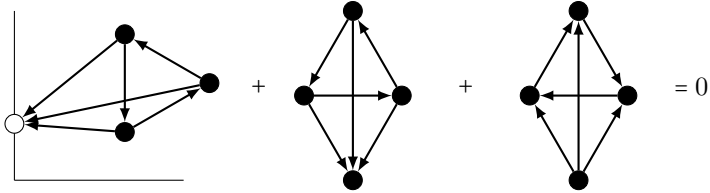

The second and third integral weights have to be understood w.r.t.\ the logarithmic propagator $\omega_{\log}$ and on configuration spaces $\overline C_{n+1,0}^+$ of distinct points in $\mathbb H^+$.

The integral weight of a wheel-like graph as in the third term has been computed explicitly in~\cite[Appendix A]{Merk}; it remains to prove that the integral weight of a wheel-like graph as in the second term vanishes.
This is, in turn, a consequence of a more general Vanishing Lemma for logarithmic integral weights.
\begin{Lem}\label{l-van}
Let $\Gamma$ be an admissible graph of type $(n,m)$ with $|E(\Gamma)|=2n+m-2$, $n\geq 2$, admitting a vertex of the first type which is the starting point of no edge.

Then, the corresponding logarithmic integral weight $\varpi_\Gamma^{\log}$ vanishes.
\end{Lem}
\begin{proof}
We may safely assume, because of the standard dimensional argument, that the vertex $v$ of the first type from which no edge departs is the endpoint of $l\geq 2$ edges.

Observe first that $\omega_{\log}$ depends holomorphically on its second argument.
Further, since $n\geq 2$, we may {\em e.g.} fix to $i$ the coordinate corresponding to some vertex $v_1\neq v$ of the first type; let us denote by $z_v$ the coordinate corresponding to the vertex $v$. 

Then, the differential form $\omega^{\log}_\Gamma$ vanishes, because it depends only holomorphically on $z_v$ and there is no non-trivial form of top degree on $\overline C_{n,m}^+$ which is holomorphic in one complex coordinate.
\end{proof}
Lemma~\ref{l-van} obviously applies to wheel-like graph as in the second term of Figure~\ref{fig-2}, and, because of the previous computations, yields the claim of Lemma~\ref{l-I_log}.
\end{proof}
Lemma~\ref{l-I_log} is proved, and this, in turn, yields the desired claim about the explicit expression for the algebra isomorphism relating the logarithmic star product $\star_{\log}$ on $A$ and the associative product on $\mathrm U(\mathfrak g)$.
\end{proof}
As remarked before the proof of Lemma~\ref{l-van}, the computation of the weights corresponding to the contributions coming from wheel-like graphs is found in~\cite[Appendix A]{Merk}, whence we deduce both expressions~\eqref{eq-star-UEA-1} and~\eqref{eq-star-UEA-2} in the form of exponentials of convergent power series, whose coefficients depend on $\zeta(\bullet)$.
Furthermore, it is well-known that $c_1$ is a derivation of both $\star$ and $\star_{\log}$ on $A$.
Therefore, by suitably changing the first coefficient, the invertible differential operators in~\eqref{eq-star-UEA-1} and~\eqref{eq-star-UEA-2} are obtained (up to the coefficient of $c_1$, which may be chosen freely) from the functions
\[
\begin{aligned}
\sqrt{j(z)}&=\sqrt{\frac{1-e^{-z}}z}=\exp\!\left(-\frac{1}4 z+\sum_{n\geq 1}\frac{\zeta(2n)}{(2n)(2\pi i)^n}z^{2n}\right)=\exp\!\left(-\frac{1}4 z+\sum_{n\geq 1}\frac{B_{2n}}{(4n)(2n)!}z^{2n}\right),\\
\frac{1}{\Gamma\left(1+\frac{z}{2\pi i}\right)}&=\exp\!\left(\gamma z+\sum_{n\geq 1}\frac{\zeta(n)}{n(2\pi i)^n}z^n\right)=\sqrt{j(z)}\ \exp\!\left(\sum_{n\geq 1}\frac{\zeta(2n+1)}{(2n+1)(2\pi i)^{2n+1}}z^{2n+1}\right),
\end{aligned}
\]
where $\gamma$ denotes the Euler--Mascheroni constant, and $B_n$, $n\geq 2$, denotes the $n$-th Bernoulli number.
Observe that the constant $\gamma$ appears mainly for aesthetical reasons.

Therefore, a bit improperly, we may use the reciprocal $\Gamma$-function with shifted argument to construct the isomorphism $\mathcal I_{\log}$: in fact, up to the term of first order (whose coefficient may be chosen freely because $c_1$ is a derivation for both products $\star$ and $\star_{\log}$), the exponential of the power series in~\eqref{eq-star-UEA-2} coincides with the function first considered in~\cite[Subsection 4.6]{K1} in a discussion about incarnations of the GRT group.
The very same expression has been re-discovered in~\cite[Subsection 4.9]{Merk} in the framework of exotic $L_\infty$-automorphisms of $T_\mathrm{poly}(X)$ and their connection with the GRT group.

As an immediate corollary of Theorem~\ref{t-star-UEA}, for a Lie algebra $\mathfrak g$ as in Section~\ref{s-1}, the star products in~\eqref{eq-star} on $A$ are equivalent w.r.t.\ the invertible differential operator with constant coefficients and of infinite order associated with the element of $\widehat{\mathrm S}(\mathfrak g)$ given by
\[
\exp\!\left(\sum_{n\geq 1}\frac{\zeta(2n+1)}{(2n+1)(2\pi i)^{2n+1}}c_{2n+1}(x)\right),\ x\in\mathfrak g.
\]

Let us remark that a more conceptual approach to the Lie algebra $\mathfrak{grt}$ in the framework of deformation quantization can be found in the fundamental paper~\cite{W1}, in particular in~\cite[Subsections 7.4, 7.5]{W1}, where modifications of the Duflo element {\em via} elements of $\mathfrak{grt}$ have been discussed in details.
\begin{Rem}\label{r-van}
We finally observe that Lemma~\ref{l-van} generalizes to the logarithmic framework the results of~\cite{Sh}: we only point out that the our Vanishing Lemma applies to a wider variety of situations.
\end{Rem}

\section*{Appendix A: a very quick review of $A_\infty$-structures}
Let $C$ be a graded vector space over $\mathbb K$: $C$ is called an $A_\infty$-algebra, if the coassociative coalgebra $\mathrm T(C[1])$ cofreely cogenerated by $C[1]$ ($[\bullet]$ being the degree-shifting functor on graded vector spaces) with counit admits a coderivation $d_C$ of degree $1$, whose square vanishes.
Similarly, given two $A_\infty$-algebras $(C,d_C)$, $(E,d_E)$ over $\mathbb K$, a graded vector space $M$ over $\mathbb K$ is an $A_\infty$-$C$-$E$-bimodule, if the cofreely cogenerated bi-comodule $\mathrm T(C[1])\otimes M[1]\otimes\mathrm T(E[1])$ with natural left- and right-coactions is endowed with a bi-coderivation $d_M$, whose square vanishes.

Observe that, in view of the cofreeness of $\mathrm T(C[1])$ and $\mathrm T(C[1])\otimes M[1]\otimes\mathrm T(E[1])$, to specify $d_C$, $d_E$ and $d_M$ is equivalent to specify its Taylor components $d_C^n:C[1]^{\otimes n}\to C[1]$, $d_E^n:E[1]^{\otimes n}\to E[1]$, $n\geq 1$, and $d_M^{k,l}:C[1]^{\otimes k}\otimes M[1]\otimes E[1]^{\otimes l}\to M[1]$, $k,l\geq 0$, all of degree $1$: the condition that $d_C$, $d_E$ and $d_M$ square to $0$ is equivalent to an infinite family of quadratic identities between the respective Taylor components.

\section*{Appendix B: on the logarithmic propagator(s)}
Let us review the main results of~\cite{ALRT-1,ALRT-2} for the convenience of the reader by pointing out the main technical details.

Convergence of the integral weights $\varpi^{\log}_\Gamma$, for $\Gamma$ admissible of type $(n,m)$ and $|E(\Gamma)|=2n+m-2$ in the logarithmic case follows from the fact that the integrand $\omega_\Gamma^{\log}$ on $C_{n,m}^+$ extends to a complex-valued, real analytic form of top degree on the compactified configuration space $\overline C_{n,m}^+$.

We must prove that $\omega^{\log}_\Gamma$ extends to all boundary strata of $\overline C_{n,m}^+$: because of the boundary properties of $\omega_{\log}$ ({\em i.e.} $\omega_{\log}$ has a pole of order $1$ along the stratum corresponding to the collapse of its two arguments inside $\mathbb H^+$), the main technical point concerns the extension to boundary strata describing the collapse of clusters of at least two points in $\mathbb H^+$ at different ``speeds'' to single points in $\mathbb H^+$.

By introducing polar coordinates $(\rho_i,\varphi_i)$, $i=1,\dots,k$, for each cluster of collapsing points near such a boundary stratum, the possible poles in $\omega^{\log}_\Gamma$ take the form
\[
\frac{1}{2\pi i}\frac{d\rho_i}{\rho_i}+\frac{d\varphi_i}{2\pi}+\cdots,\ i=1,\dots,k,
\]
where $\cdots$ denotes a complex-valued, real analytic $1$-form.
The angle differential $d\varphi_i$ appears without a factor $\rho_i$ only when paired to the corresponding singular logarithmic differential $d\rho_i/\rho_i$: since $\omega^{\log}_\Gamma$ has top degree and because of skew-symmetry of products of $1$-forms, the singular logarithmic differential $d\rho_i/\rho_i$ must be always paired with $\rho_i d\varphi_i$, coming from the complex-valued, real analytic parts of the factors of $\omega^{\log}_\Gamma$.
The polar coordinates appear naturally by choosing a global section of the trivial principal $G_3=\mathbb R^+\ltimes\mathbb C$-bundle $\mathrm{Conf}_n$ of the configuration space of $n$ points in $\mathbb C$, $n\geq 2$, which identifies it with $S^1\times\mathrm{Conf}_{n-2}(\mathbb C\smallsetminus\{0,1\})$: such a space appears naturally for any cluster of points, the angle coordinates are associated with the $S^1$-factors and the strata are re-covered by setting the radius coordinates to $0$.
The detailed discussion of this topic can be found in the proof of~\cite[Proposition 5.2]{ALRT-2}.

These arguments can be slightly adapted to $\omega^{\log}_\Gamma$, for $\Gamma$ admissible of type $(n,k,l)$ and $|E(\Gamma)|=2n+k+l-1$, where $\omega_{\log}$ is replaced by $\omega^{+,-}_{\log}$: namely, $\omega^{+,-}_{\log}$ on $C_{n,k,l}^+$ extends to a complex-valued, real analytic form of top degree on $\overline C_{n,k,l}^+$.

Similar arguments imply that $\omega^{\log}_\Gamma$, for $\Gamma$ admissible of type $(n,m)$ or $(n,k,l)$ and $|E(\Gamma)|=2n+m-3$ or $|E(\Gamma)|=2n+k+l-2$, yield complex-valued forms on $\overline C_{n,m}^+$ or $\overline C_{n,k,l}^+$ with poles of order $1$ along the boundary.
Moreover, their formal regularizations along boundary strata of codimension $1$ extend to complex-valued, real analytic forms of top degree on those boundary strata: the regularization morphism here formally sets to $0$ the logarithmic differentials $d\rho_i/\rho_i$, whenever $\rho_i=0$.
The detailed version of these arguments can be found in~\cite[Proposition 5.3]{ALRT-2}.
\begin{Thm}\label{t-stokes-reg}
Let $X$ be a compact, oriented manifold with corners of degree $d\geq 2$.
Further, consider an element $\omega$ of $\Omega_1^{d-1}(X)$, which satisfies the two additional properties: 
\begin{itemize} 
\item[$i)$] its exterior derivative $d\omega$ is a complex-valued, real analytic form of top degree on $X$, and
\item[$ii)$] the regularization $\mathrm{Reg}_{\partial X}(\omega)$ along the boundary strata $\partial X$ of codimension $1$ of $X$ is a complex-valued, real analytic form on $\partial X$.
\end{itemize}
Then, the integrals of $d\omega$ over $X$ and the integral of $\mathrm{Reg}_{\partial X}(\omega)$ over $\partial X$ exist and the following identity holds true: 
\begin{equation*}
\int_Xd\omega=\int_{\partial X}\mathrm{Reg}_{\partial X}(\omega).
\end{equation*}
\end{Thm}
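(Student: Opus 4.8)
The plan is to reduce the statement to the ordinary Stokes' theorem by an excision-and-limit argument, the two hypotheses being needed only to ensure that both integrals in the asserted identity converge. For each codimension-$1$ boundary face $F$ of $X$ fix a boundary-defining function $\rho_F\geq 0$, and for $\varepsilon>0$ set
\[
X_\varepsilon=\{x\in X:\ \rho_F(x)\geq\varepsilon\ \text{for every codimension-}1\text{ face }F\}.
\]
Then $X_\varepsilon$ is a compact manifold with corners on a neighbourhood of which $\omega$ is real analytic, so the classical Stokes' theorem applies and gives
\[
\int_{X_\varepsilon}\mathrm{d}\omega=\int_{\partial X_\varepsilon}\omega.
\]
The whole argument then consists in letting $\varepsilon\to 0$ in both sides and identifying the limits with $\int_X\mathrm{d}\omega$ and $\int_{\partial X}\mathrm{Reg}_{\partial X}(\omega)$ respectively; note that the boundary $\partial X_\varepsilon$ is made up entirely of the collar faces $\{\rho_F=\varepsilon\}$ and contains no piece of $\partial X$ itself.

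For the left-hand side, hypothesis $i)$ guarantees that $\mathrm{d}\omega$ extends to a real analytic top-degree form on the compact $X$; in particular it is bounded, hence integrable, while $X\smallsetminus X_\varepsilon$ has volume $O(\varepsilon)$. Therefore $\int_{X_\varepsilon}\mathrm{d}\omega\to\int_X\mathrm{d}\omega$, which at the same time proves existence of the left-hand integral. Symmetrically, hypothesis $ii)$ makes the integrand $\mathrm{Reg}_{\partial X}(\omega)$ real analytic on the compact boundary $\partial X$, so that the right-hand integral exists as well.

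For the right-hand side I work in the polar-type coordinates $(\rho,\varphi,\dots)$ of the first part of Appendix B, in which the order-$1$ pole of $\omega$ along a codimension-$1$ face is exhibited as
\[
\omega=\frac{\mathrm{d}\rho}{\rho}\wedge\alpha+\beta,
\]
with $\alpha,\beta$ real analytic up to $\rho=0$. Since $\rho_F\equiv\varepsilon$ is constant on the collar face $\{\rho_F=\varepsilon\}$, the differential $\mathrm{d}\rho_F$ pulls back to zero there, so the pullback of $\omega$ kills the singular term and leaves $\beta|_{\rho_F=\varepsilon}$, which is exactly the restriction of $\mathrm{Reg}_{\partial X}(\omega)$ transported to $\rho_F=\varepsilon$. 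The collar faces, oriented as the boundary of $X_\varepsilon$, carry the same induced orientation as the faces $F$ seen as boundary strata of $X$, so the sign bookkeeping is identical to the smooth case; as noted in the main text, the regularization morphism does not alter orientations. It remains to show that $\int_{\{\rho_F=\varepsilon\}}\beta|_{\rho_F=\varepsilon}\to\int_{F}\mathrm{Reg}_{\partial X}(\omega)$ and to sum over all faces.

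The hard part is the behaviour near the corners of $X$ of codimension $\geq 2$, where several defining functions $\rho_i$ vanish simultaneously and the collar faces $\{\rho_i=\varepsilon\}$ meet along edges $\{\rho_i=\varepsilon,\ \rho_j=\varepsilon\}$. A priori the pullback of $\omega$ to $\{\rho_i=\varepsilon\}$ may still carry transverse logarithmic differentials $\mathrm{d}\rho_j/\rho_j$, threatening a $\log\varepsilon$-divergence of the face integral as two or more $\rho_j$ tend to $0$ together, and one must also check that no spurious corner term survives in the limit. Both points are controlled by the structural fact recalled in Appendix B: since every pole has order exactly $1$, in a top-degree form a singular differential $\mathrm{d}\rho_j/\rho_j$ can occur only wedged with its companion real analytic factor $\rho_j\,\mathrm{d}\varphi_j$, and the product $\frac{\mathrm{d}\rho_j}{\rho_j}\wedge\rho_j\,\mathrm{d}\varphi_j=\mathrm{d}\rho_j\wedge\mathrm{d}\varphi_j$ is in fact regular. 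Consequently the restriction of $\omega$ to each top-dimensional collar face has no genuine transverse singularity, the face integrals converge, and—organising the analysis by induction on the depth of the corner stratification and using hypothesis $ii)$ to keep each regularized boundary integral finite—their limit is computed stratum by stratum to be $\int_{\partial X}\mathrm{Reg}_{\partial X}(\omega)$. Combining this boundary limit with the interior limit of the previous paragraph yields the asserted regularized Stokes identity. I expect this corner analysis, rather than the two one-dimensional limits, to be the genuine obstacle, exactly as in the detailed treatment of~\cite[Propositions 5.2 and 5.3]{ALRT-2}.
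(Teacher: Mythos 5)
The paper itself contains no proof of Theorem~\ref{t-stokes-reg}---it refers to~\cite[Subsection 2.3]{ALRT-1}, which is listed as in preparation---so your excision-and-limit argument has to be judged on its own merits. Its skeleton is sound and is surely the natural one: classical Stokes on the truncation $X_\varepsilon$, dominated convergence for $\int_{X_\varepsilon}d\omega\to\int_X d\omega$ using hypothesis $i)$, and identification of the limit of the collar-face integrals with $\int_{\partial X}\mathrm{Reg}_{\partial X}(\omega)$. The genuine gap is in the corner analysis, which you yourself single out as the crux. You dispose of the transverse logarithmic differentials $d\rho_j/\rho_j$ surviving on a collar face $\{\rho_i=\varepsilon\}$ by invoking the ``structural fact'' that in a top-degree form a singular differential can only occur wedged against a companion factor $\rho_j\,d\varphi_j$, so that the product is regular. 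But that fact is a property of the particular integrands $\omega_\Gamma^{\log}$ of Appendix B (products of propagators, each contributing matched singular and angular pieces); it is neither a hypothesis of the theorem nor a consequence of $\omega\in\Omega_1^{d-1}(X)$ together with $i)$ and $ii)$. For a general element of $\Omega_1^{d-1}(X)$, say $\omega=\frac{d\rho_j}{\rho_j}\omega_j+\eta$ with $\omega_j$ real analytic and not vanishing on the face $\{\rho_j=0\}$, the restriction to the collar face $\{\rho_i=\varepsilon\}$ genuinely retains a transverse pole; your assertion that it ``has no genuine transverse singularity'' is false under the stated hypotheses, and with it the claimed convergence of the face integrals.

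What actually closes the argument is hypothesis $ii)$, used quantitatively and not merely ``to keep each regularized boundary integral finite''. On $\{\rho_i=\varepsilon\}$ expand each coefficient as $\omega_j|_{\rho_i=\varepsilon}=\omega_j|_{\rho_i=0}+O(\varepsilon)$. The terms with $\omega_j|_{\rho_i=0}$ reassemble into $\mathrm{Reg}_{F_i}(\omega)$ transported to the collar face; by $ii)$ this is real analytic on the compact face $F_i$, hence bounded, so its integral over $\{\rho_i=\varepsilon\}\cap X_\varepsilon$ converges to $\int_{F_i}\mathrm{Reg}_{F_i}(\omega)$. The remainders are transverse poles whose coefficients are $O(\varepsilon)$, and integrating $\varepsilon/\rho_j$ over $\{\rho_j\geq\varepsilon\}$ yields the bound $O\bigl(\varepsilon\log^{d-1}(1/\varepsilon)\bigr)\to 0$. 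This Taylor-plus-logarithmic estimate, carried out in local corner charts and glued by a partition of unity (harmless here, since it is applied to the limit of integrals and not to the Stokes identity itself), is the missing step; once it replaces the appeal to the structural fact, the unspecified ``induction on the depth of the corner stratification'' becomes unnecessary, because the estimate is uniform over all corners at once.
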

In the assumptions of Theorem~\ref{t-stokes-reg}, $\Omega_1^{d-1}(X)$ denotes the space of differential forms $\omega$ on $X$ of degree $d-1$ which have the form
\[
\omega=\sum_{i=1}^p\frac{dx_i}{x_i}\omega_i+\eta
\]
in every local chart of $X$ for which $X=(\mathbb R_+)^p\times \mathbb R^q$, $p+q=d$, and $\omega_i$, $\eta$, $i=1,\dots,p$, are complex-valued, real analytic forms on $X$.
The proof of Theorem~\ref{t-stokes-reg}, as well as of other variants of Stokes' Theorem in presence of singularities, can be found in~\cite[Subsection 2.3]{ALRT-1}.

Since $\omega^{\log}_\Gamma$ is closed and because of the previous arguments, Stokes' Theorem~\ref{t-stokes-reg} applies to $\omega^{\log}_\Gamma$, whence the associativity of $\star_{\log}$ (more generally, the $L_\infty$-relations for the logarithmic formality quasi-isomorphism and its corresponding version in presence of two branes).

\begin{bibdiv}
\begin{biblist}

\bib{ALRT-1}{article}{
  author={Alekseev, Anton},
  author={L\"offler, Johannes},
  author={Rossi, Carlo A.},
  author={Torossian, Charles},
  title={Stokes' Theorem in presence of poles and logarithmic singularities},
  journal={(in preparation)},
  date={2012}
}

\bib{ALRT-2}{article}{
  author={Alekseev, Anton},
  author={L\"offler, Johannes},
  author={Rossi, Carlo A.},
  author={Torossian, Charles},
  title={The logarithmic formality quasi-isomorphism},
  journal={(in preparation)},
  date={2012}
}


\bib{CFR}{article}{
   author={Calaque, Damien},
   author={Felder, Giovanni},
   author={Rossi, Carlo A.},
   title={Deformation quantization with generators and relations},
   journal={J. Algebra},
   volume={337},
   date={2011},
   pages={1--12},
   issn={0021-8693},
   review={\MR{2796061}},
   doi={10.1016/j.jalgebra.2011.03.037},
}

\bib{CFFR}{article}{
   author={Calaque, Damien},
   author={Felder, Giovanni},
   author={Ferrario, Andrea},
   author={Rossi, Carlo A.},
   title={Bimodules and branes in deformation quantization},
   journal={Compos. Math.},
   volume={147},
   date={2011},
   number={1},
   pages={105--160},
   issn={0010-437X},
   review={\MR{2771128}},
   doi={10.1112/S0010437X10004847},
}

\bib{CF}{article}{
   author={Cattaneo, Alberto S.},
   author={Felder, Giovanni},
   title={Relative formality theorem and quantisation of coisotropic
   submanifolds},
   journal={Adv. Math.},
   volume={208},
   date={2007},
   number={2},
   pages={521--548},
   issn={0001-8708},
   review={\MR{2304327 (2008b:53119)}},
   doi={10.1016/j.aim.2006.03.010},
}

\bib{CRT}{article}{
  author={Cattaneo, Alberto S.},
  author={Rossi, Carlo A.},
  author={Torossian, Charles},
  title={Biquantization of symmetric pairs and the quantum shift},
  eprint={arXiv:1105.5973},
  date={2011}
}

\bib{Kath}{article}{
   author={Kathotia, Vinay},
   title={Kontsevich's universal formula for deformation quantization and
   the Campbell-Baker-Hausdorff formula},
   journal={Internat. J. Math.},
   volume={11},
   date={2000},
   number={4},
   pages={523--551},
   issn={0129-167X},
   review={\MR{1768172 (2002h:53154)}},
   doi={10.1142/S0129167X0000026X},
}

\bib{K1}{article}{
   author={Kontsevich, Maxim},
   title={Operads and motives in deformation quantization},
   note={Mosh\'e Flato (1937--1998)},
   journal={Lett. Math. Phys.},
   volume={48},
   date={1999},
   number={1},
   pages={35--72},
   issn={0377-9017},
   review={\MR{1718044 (2000j:53119)}},
   doi={10.1023/A:1007555725247},
}

\bib{K}{article}{
   author={Kontsevich, Maxim},
   title={Deformation quantization of Poisson manifolds},
   journal={Lett. Math. Phys.},
   volume={66},
   date={2003},
   number={3},
   pages={157--216},
   issn={0377-9017},
   review={\MR{2062626 (2005i:53122)}},
}


\bib{Merk}{article}{
  author={Merkulov, Sergei},
  title={Exotic automorphisms of the Schouten algebra of polyvector fields},
  eprint={arXiv:0809.2385},
  date={2008}
}

\bib{Sh}{article}{
   author={Shoikhet, Boris},
   title={Vanishing of the Kontsevich integrals of the wheels},
   note={EuroConf\'erence Mosh\'e Flato 2000, Part II (Dijon)},
   journal={Lett. Math. Phys.},
   volume={56},
   date={2001},
   number={2},
   pages={141--149},
   issn={0377-9017},
   review={\MR{1854132 (2002j:53119)}},
   doi={10.1023/A:1010842705836},
}

\bib{Sh1}{article}{
  author={Shoikhet, Boris},
  title={Kontsevich formality and PBW algebras},
  eprint={arXiv:0708.1634},
  date={2007}
}

\bib{W}{article}{
   author={Willwacher, Thomas},
   title={A counterexample to the quantizability of modules},
   journal={Lett. Math. Phys.},
   volume={81},
   date={2007},
   number={3},
   pages={265--280},
   issn={0377-9017},
   review={\MR{2355492 (2008j:53160)}},
}

\bib{W1}{article}{
  author={Willwacher, Thomas},
  title={M. Kontsevich's graph complex and the Grothendieck--Teichm\"uller Lie algebra},
  eprint={arXiv:1009.1654},
  date={2010},
}

\end{biblist}
\end{bibdiv}

\end{document}